\documentclass{proc-l-hijacked}
\usepackage{amssymb, amsmath, amsthm, hyperref,}

\newtheorem{theorem}{Theorem}[section]

\theoremstyle{definition}

\newtheorem{example}[theorem]{Example}

\newtheorem{remark}[theorem]{Remarks}


\numberwithin{equation}{section}

\DeclareMathOperator{\Sa}{\mathcal S}
\DeclareMathOperator{\rad}{Rad}
\DeclareMathOperator{\Ima}{Im}
\DeclareMathOperator{\Rea}{Re}

\begin{document}
	\title[]{A spectral characterization of isomorphisms on $C^\star$-algebras}
	\author{ C. Tour\'e, F. Schulz and R. Brits}
	\address{Department of Mathematics, University of Johannesburg, South Africa}
	\email{cheickkader89@hotmail.com, francoiss@uj.ac.za, rbrits@uj.ac.za}
	\subjclass[2010]{46Hxx, 46Lxx}
	\keywords{}
	
	\begin{abstract}
Following a result of Hatori, Miura and Tagaki \cite{unitalspec} we give here a spectral characterization of an isomorphism from a $C^\star$-algebra onto a Banach algebra. We then use this result to show that a $C^\star$-algebra $A$ is isomorphic to a Banach algebra $B$ if and only if there exists a surjective function $\phi:A\rightarrow B$ satisfying (i) $\sigma\left(\phi(x)\phi(y)\phi(z)\right)=\sigma\left(xyz\right)$  for all $x,y,z\in A$ (where $\sigma$ denotes the spectrum), and (ii) $\phi$ is continuous at $\mathbf 1$. A simple example shows that (i) cannot be relaxed to products of two elements, as is the case with commutative Banach algebras. Our results also elaborate on a paper (\cite{specproperties}) of Bre\v{s}ar and \v{S}penko.    
	\end{abstract}
	\parindent 0mm
	
	\maketitle

\section{Introduction}

In general $A$ will be a unital and complex Banach algebra, with the unit denoted by $\mathbf 1$. The invertible group of $A$ will be denoted by $G(A)$.  If $x\in A$ then the spectrum of $x$ (relative to $A$) is the (necessarily non-empty and compact) set $\sigma(x,A):=\{\lambda\in\mathbb C:\lambda\mathbf 1-x\notin G(A)\},$ and the spectral radius of $x\in A$ is defined as $\rho(x,A):=\sup\{|\lambda|:\lambda\in\sigma(x,A)\}.$ For $x\in A,$  $C_{\{x\}}$ denotes the bicommutant of the set $\{x\}$. Recall that if $y\in C_{\{x\}}$ then $\sigma(y,A)=\sigma\left(y,C_{\{x\}}\right)$, in which case we shall simply write $\sigma(y)$ for the spectrum of $y$, and $\rho(y)$ for the spectral radius of $y$; the same convention will be used whenever the algebra under consideration is clear from the context. We shall further write $\rad(A)$ for the radical of $A$, and $Z(A)$ for the centre of $A$, which is, by definition, equal to $C_{\{\mathbf 1\}}$.        

The proofs of the results in the current paper rely fundamentally on the following two theorems, and the well-known Lie-Trotter Formula (\cite[p.67]{aupetit1991primer}) for exponentials.  

\begin{theorem}[{\cite[Theorem 2.6]{univari}}]\label{unique} Let $A$ be a semisimple Banach algebra and $a,b\in A$. Then $a=b$ if and only if $\sigma(ax)=\sigma(bx)$ for all $x\in A$ satisfying $\rho(x-\mathbf 1)<1$. In particular $a=b$ if and only if $\sigma(ae^y)=\sigma(be^y)$ for all $y\in A$.
\end{theorem}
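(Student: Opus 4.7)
The forward direction is immediate, so the plan is to establish the converse: assuming $\sigma(ax)=\sigma(bx)$ for every $x\in A$ with $\rho(x-\mathbf{1})<1$, deduce $a=b$. Once this is done, the ``in particular'' statement follows by substituting $x=e^{\lambda y}$ with $|\lambda|$ small enough to force $\rho(e^{\lambda y}-\mathbf{1})<1$ and then rescaling $y$ to sweep out all of $A$; so it suffices to focus on the apparently weaker hypothesis involving only $x$ near $\mathbf{1}$.

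My strategy is to reduce the set-valued, non-linear spectral equation to a scalar linear condition that semisimplicity can annihilate. Fix $y\in A$ and consider the entire $A$-valued functions $f_a(\lambda)=ae^{\lambda y}$ and $f_b(\lambda)=be^{\lambda y}$. The hypothesis gives $\sigma(f_a(\lambda))=\sigma(f_b(\lambda))$ on a neighbourhood of $0$ in $\mathbb{C}$. I would then invoke Aupetit's theory of analytic multifunctions, together with the subharmonicity of $\lambda\mapsto\log\rho(f_a(\lambda))$ and $\lambda\mapsto\log\rho(f_b(\lambda))$, to propagate the local spectral equality to a global identity, at least at the level of spectral radii, for all $\lambda\in\mathbb{C}$ and all $y\in A$.

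Next, I would expand $f_a(\lambda)=a+\lambda\,ay+\tfrac{\lambda^2}{2}ay^2+\cdots$ (and similarly for $b$), differentiate spectral quantities at $\lambda=0$, and use the Lie--Trotter formula $e^{u+v}=\lim_n(e^{u/n}e^{v/n})^n$ together with $\sigma(uv)=\sigma(vu)$ to bootstrap the multiplicative spectral equality into a form like $\rho((a-b)z)=0$ for $z$ running over a sufficiently rich family. Zemanek's characterization of the radical then forces $a-b\in\rad(A)=\{0\}$ by semisimplicity.

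The hard part will be the last bridge: the datum $\sigma(f_a(\lambda))=\sigma(f_b(\lambda))$ is genuinely non-linear, whereas the conclusion $a=b$ is linear. Crossing that gap seems to require either extracting trace-like functionals from the spectrum via characters on commutative subalgebras $C_{\{x\}}$ containing well-chosen test elements, or else matching up spectral projections at corresponding isolated eigenvalues of $f_a$ and $f_b$ and then propagating the agreement to a full neighbourhood of $\mathbf{1}$ by density of exponential products. I expect this is where the proof invests its main technical effort.
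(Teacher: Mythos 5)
This statement is not proved in the paper at all: it is imported verbatim from \cite[Theorem 2.6]{univari}, so there is no internal proof to compare you against, and your attempt has to stand on its own. As it stands it is a programme rather than a proof. The entire content of the theorem is the implication ``$\sigma(ax)=\sigma(bx)$ for all $x$ with $\rho(x-\mathbf 1)<1$ $\Rightarrow$ $a=b$'', i.e.\ the passage from a set-valued, non-additive datum to the linear conclusion $a-b\in\rad(A)=\{0\}$, and this is precisely the step you defer (``I expect this is where the proof invests its main technical effort''). The tools you name do not close the gap in the form you invoke them: spectra are only upper semicontinuous analytic multifunctions, so ``differentiating spectral quantities at $\lambda=0$'' along $\lambda\mapsto ae^{\lambda y}$ has no direct meaning; there is no identity principle allowing you to globalize $\sigma\left(ae^{\lambda y}\right)=\sigma\left(be^{\lambda y}\right)$ from a neighbourhood of $0$ to all of $\mathbb C$ merely from subharmonicity of $\log\rho$, and even if you had equality of spectral radii everywhere, that is much weaker than what is needed; and Zem\'anek's characterization requires $\rho\left((a-b)x\right)=0$ for all $x$, which nothing in your sketch extracts from $\sigma(ax)=\sigma(bx)$, exactly because the spectrum is not additive. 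So the decisive bridge is missing, and with it the proof.

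Your reduction of the ``in particular'' part is also stated backwards. To deduce it from the first equivalence you must show that the exponential hypothesis implies the hypothesis quantified over \emph{all} $x$ with $\rho(x-\mathbf 1)<1$; substituting $x=e^{\lambda y}$ with $\lambda$ small only verifies the condition on certain elements of that spectral ball, which does not entitle you to apply the first statement. The clean argument goes the other way: if $\rho(x-\mathbf 1)<1$ then $\sigma(x)$ lies in the open disc of radius $1$ about $1$, hence omits $0$ and does not separate $0$ from infinity, so $x=e^{y}$ for some $y\in A$ by the holomorphic functional calculus; consequently $\sigma(ae^{y})=\sigma(be^{y})$ for all $y$ gives $\sigma(ax)=\sigma(bx)$ for every such $x$, and the ``in particular'' clause is immediate from the main equivalence. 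I would encourage you either to consult the cited source for its actual mechanism or to make the subharmonicity strategy precise at the one point that matters, namely how spectral equality near $\mathbf 1$ is converted into $\rho\left((a-b)x\right)=0$ for enough $x$.
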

 
\begin{theorem}[{\cite[Corollary 3.3]{unitalspec}}]\label{commcase} Let $A$  be a unital, semisimple, and commutative Banach algebra, and let $B$ be a unital and commutative Banach algebra. Suppose that $\phi$ is a map from $A$ onto $B$ such that the equations
\begin{itemize}	
\item[(i)]{$\phi(\mathbf 1)=\mathbf 1,$}
\item[(ii)]{$\sigma(\phi(x)\phi(y))=\sigma(xy)$ for all $x,y\in A$}
\end{itemize}
hold. Then B is semisimple and $\phi$ is an isomorphism.
\end{theorem}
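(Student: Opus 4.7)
My plan is to exploit the Gelfand representation of commutative Banach algebras. Setting $y=\mathbf 1$ in (ii) immediately gives $\sigma(\phi(x))=\sigma(x)$ for every $x\in A$, so $\phi$ preserves spectra; in particular $\sigma(\phi(0))=\{0\}$, hence $\phi(0)\in\rad(B)$.

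The heart of the argument is to lift characters of $B$ to characters of $A$. For each character $\psi\in\Delta_B$, set $\tau_\psi(x):=\psi(\phi(x))$. The hypotheses give $\tau_\psi(\mathbf 1)=1$, $\tau_\psi(x)\in\sigma(x)$, and $\tau_\psi(x)\tau_\psi(y)=\psi(\phi(x)\phi(y))\in\sigma(\phi(x)\phi(y))=\sigma(xy)$; for invertible $x$, taking $y=x^{-1}$ gives $\tau_\psi(x^{-1})=\tau_\psi(x)^{-1}$. The critical step is to promote $\tau_\psi$ to a character of $A$. Viewing $A$ via its Gelfand transform inside $C(\Delta_A)$, the inclusion $\tau_\psi(x)\in\hat x(\Delta_A)$ exhibits $\tau_\psi$ as a pointwise selection from the Gelfand transform, and the product constraint $\tau_\psi(x)\tau_\psi(y)\in\widehat{xy}(\Delta_A)$ should force this selection to be realised by a single $\chi_\psi\in\Delta_A$. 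An alternative route is to try to verify the hypotheses of Gleason--Kahane--\.{Z}elazko or Kowalski--S\l{}odkowski directly from the product structure.

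Once every $\tau_\psi=\chi_\psi\in\Delta_A$, one obtains $\Phi:\Delta_B\to\Delta_A$, $\psi\mapsto\chi_\psi$. Surjectivity of $\phi$ makes $\Phi$ injective; a compactness argument combined with $\sigma(\phi(x))=\sigma(x)$ makes $\Phi$ surjective; and weak-$\star$ continuity upgrades it to a homeomorphism. Under this identification, $\psi(\phi(xy))=\chi_\psi(xy)=\chi_\psi(x)\chi_\psi(y)=\psi(\phi(x))\psi(\phi(y))$ for every $\psi$, so $\phi(xy)-\phi(x)\phi(y)\in\rad(B)$; similarly for additivity and scalar multiplication. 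Semisimplicity of $B$ drops out of the same bookkeeping: any $b=\phi(x)\in\rad(B)$ forces $\chi_\psi(x)=0$ for every $\psi$, hence $x\in\rad(A)=\{0\}$, so $\rad(B)\subseteq\{\phi(0)\}$; since $\rad(B)$ is an additive subgroup containing $0$, we get $\rad(B)=\{0\}$ and $\phi(0)=0$. The congruences above then become equalities, making $\phi$ a homomorphism, and injectivity of $\phi$ follows from the same character argument, yielding an isomorphism.

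The main obstacle, and what I expect to be the longest part of the proof, is the character-extension step: showing each $\tau_\psi$ is multiplicative and linear. I would expect this to be handled either by a direct spectral calculation exploiting the product hypothesis against pencil elements like $\mathbf 1+\lambda x$, or by a Kowalski--S\l{}odkowski-style rigidity argument adapted to the multiplicative setting.
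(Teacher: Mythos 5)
First, note that the paper does not prove this statement at all: it is imported verbatim from Hatori, Miura and Takagi \cite{unitalspec} (their Corollary 3.3) and used as a black box, so there is no internal proof to measure you against; your proposal has to stand on its own as a proof of the cited result.

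As it stands it does not, because the step you yourself flag as the heart of the matter is left unproved, and it is not a routine gap. You need: if $\tau:A\rightarrow\mathbb C$ satisfies $\tau(\mathbf 1)=1$ and $\tau(x)\tau(y)\in\sigma(xy)$ for all $x,y\in A$, then $\tau$ is a character. Neither Gleason--Kahane--\.{Z}elazko nor Kowalski--S\l{}odkowski applies: those theorems concern additive/difference conditions of the form $f(x)-f(y)\in\sigma(x-y)$, whereas your constraint is multiplicative, and the multiplicative analogue is a genuinely different (and harder) rigidity statement --- note, for instance, that without the normalization $\tau(\mathbf 1)=1$ it is simply false, since $\tau=-\chi$ for a character $\chi$ satisfies $\tau(x)\tau(y)=\chi(xy)\in\sigma(xy)$. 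Such ``multiplicative spectral functional'' theorems exist in the literature but are themselves the subject of separate papers (some by the authors of the present one), so invoking them as a hoped-for consequence of ``pointwise selection from the Gelfand transform'' is circular in difficulty: it is essentially equivalent to the theorem you are trying to prove, restricted to $B=\mathbb C$. A secondary gap: your surjectivity of $\Phi:\Delta_B\rightarrow\Delta_A$ is only asserted (``a compactness argument''), yet your proofs of injectivity of $\phi$ and of $x\in\rad(A)$ from $\chi_\psi(x)=0$ need the lifted characters to separate points of $A$, i.e.\ need the image of $\Phi$ to be at least a boundary for $A$. (This particular point is repairable without characters: $\phi(x)=\phi(y)$ gives $\sigma(xz)=\sigma(\phi(x)\phi(z))=\sigma(\phi(y)\phi(z))=\sigma(yz)$ for all $z$, and semisimplicity of $A$ then yields $x=y$ by a determinacy result such as Theorem~\ref{unique}; similarly $\phi(x)\in\rad(B)$ gives $\sigma(x)=\sigma(\phi(x))=\{0\}$, so $x=0$. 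But the character-lifting step has no such shortcut and is where a real argument is required.)
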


The current paper is in fact motivated by Theorem~\ref{commcase}, which is the main result of \cite{unitalspec}, as well as the results in Section 4.1 of
\cite{specproperties}. The following example shows that Theorem~\ref{commcase} fails in even the simplest of non-commutative cases:
\begin{example}\label{counter}
Let $A=B=M_2(\mathbb C)$ and define $\phi:A\rightarrow B$ by $\phi(a)=a^t$ where $a^t$ denotes the transpose of the matrix $a$. Then $\phi(\mathbf 1)=\mathbf 1,$ $\sigma(\phi(x)\phi(y))=\sigma(xy)$, for all $x,y\in A$, $\phi$ is surjective  (and $\phi$ is injective, linear and continuous). But clearly $\phi$ is not an isomorphism because $\phi$ is not multiplicative.   
\end{example} 
	 
In pursuance of our main results in Section 2, we need the following:

\begin{theorem}\label{properties}
Let $A$ be a semisimple Banach algebra, and let $\phi$ be a function from $A$ onto a Banach algebra $B$ satisfying
\begin{equation}\label{assumption}
\sigma\left(\prod_{i=1}^m\phi(x_i)\right)=\sigma\left(\prod_{i=1}^mx_i\right)
\end{equation}
 for all $x_i\in A$, $1\leq m\leq3$.
Then
\begin{itemize}
\item[(i)]{$\phi$ is bijective,}
\item[(ii)]{$B$ is semisimple,}
\item[(iii)]{$\phi$ is multiplicative i.e. $\phi(xy)=\phi(x)\phi(y)$ for all $x,y\in A,$}
\item[(iv)]{$\phi(\mathbf 1)=\mathbf 1$ and $\phi(\lambda x)=\lambda\phi(x)$ for all $x\in A$, $\lambda\in\mathbb C$, }
\item[(v)]{if $x,y\in A$ then $xy=yx\Leftrightarrow \phi(x)\phi(y)=\phi(y)\phi(x),$}
\item[(vi)]{if $x\in A$ then  $y\in C_{\{x\}}\Leftrightarrow \phi(y)\in C_{\{\phi(x)\}},$} 
\item[(vii)]{for each $x\in A$, $C_{\{x\}}\big/\rad\left(C_{\{x\}}\right)$ is isomorphic to $C_{\{\phi(x)\}}\big/\rad\left(C_{\{\phi(x)\}}\right).$} 	
\end{itemize}
\end{theorem}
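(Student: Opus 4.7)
The plan is to establish the seven items using the spectral uniqueness principle of Theorem~\ref{unique} to convert each of the hypotheses in~(\ref{assumption}) into an algebraic identity. For (i), injectivity follows at once: if $\phi(a)=\phi(b)$, taking $x_3=\mathbf 1$ in the $m=3$ hypothesis yields $\sigma(ax)=\sigma(\phi(a)\phi(x))=\sigma(\phi(b)\phi(x))=\sigma(bx)$ for every $x\in A$, so Theorem~\ref{unique} forces $a=b$. For (ii), any $b\in\rad(B)$ is of the form $\phi(a)$ by surjectivity; since $b\phi(y)$ is quasinilpotent for every $y\in A$, the $m=2$ identity gives $\sigma(ay)=\{0\}$ for all $y$, hence $a\in\rad(A)=\{0\}$. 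Thus $\rad(B)\subseteq\{\phi(0)\}$, and since $0\in\rad(B)$ always, we conclude that $\phi(0)=0$ and $\rad(B)=\{0\}$.

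With $B$ now semisimple, (iii) and (iv) go through uniformly. The $m=3$ hypothesis and surjectivity yield $\sigma(\phi(xy)b)=\sigma(\phi(x)\phi(y)b)$ for all $b\in B$, and Theorem~\ref{unique} applied in $B$ forces $\phi(xy)=\phi(x)\phi(y)$. The same mechanism, using the $m=2$ hypothesis, gives $\phi(\mathbf 1)=\mathbf 1$ from $\sigma(\phi(\mathbf 1)\phi(y))=\sigma(\phi(y))$, and $\phi(\lambda x)=\lambda\phi(x)$ from $\sigma(\phi(\lambda x)\phi(y))=\lambda\sigma(\phi(x)\phi(y))$ (with $\lambda=0$ handled by $\phi(0)=0$). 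Part (v) is immediate from multiplicativity and injectivity, and (vi) then follows by combining (v) with surjectivity: every element of $B$ commuting with $\phi(x)$ is of the form $\phi(z)$ with $zx=xz$, and the bicommutant condition transports through $\phi$ in both directions.

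The main obstacle is (vii), because $\phi$ is not known to be additive and therefore does not descend to the quotients in the obvious way. The key observation is that $C_{\{x\}}$ is a commutative unital Banach subalgebra of $A$, so $A_0:=C_{\{x\}}/\rad(C_{\{x\}})$ is a commutative unital semisimple Banach algebra, and likewise for $B_0:=C_{\{\phi(x)\}}/\rad(C_{\{\phi(x)\}})$. I would define $\tilde\phi:A_0\to B_0$ by $\tilde\phi\bigl(y+\rad(C_{\{x\}})\bigr):=\phi(y)+\rad(C_{\{\phi(x)\}})$ and verify well-definedness spectrally: if $y-y'\in\rad(C_{\{x\}})$, then for every $z\in C_{\{x\}}$ the element $(y-y')z$ lies in $\rad(C_{\{x\}})$, so $yz$ and $y'z$ differ by a radical element and hence $\sigma(yz)=\sigma(y'z)$; the $m=2$ hypothesis transfers this to $\sigma(\phi(y)\phi(z))=\sigma(\phi(y')\phi(z))$, which by (vi) is the same as $\sigma(\phi(y)w)=\sigma(\phi(y')w)$ for all $w\in C_{\{\phi(x)\}}$, and Theorem~\ref{unique} applied in the semisimple algebra $B_0$ (together with the fact that passage to the quotient by the radical preserves spectrum) gives $\phi(y)+\rad(C_{\{\phi(x)\}})=\phi(y')+\rad(C_{\{\phi(x)\}})$. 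The resulting $\tilde\phi$ is surjective by (vi), sends the unit to the unit by (iv), and the $m=2$ hypothesis descends to $\sigma_{B_0}(\tilde\phi(u)\tilde\phi(v))=\sigma_{A_0}(uv)$. Applying Theorem~\ref{commcase} to $\tilde\phi:A_0\to B_0$ then yields the desired algebra isomorphism.
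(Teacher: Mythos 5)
Your proposal is correct and follows essentially the same route as the paper: Theorem~\ref{unique} converts the spectral identities into the algebraic statements (i)--(iv), commutation transfers via surjectivity for (v)--(vi), and (vii) is obtained from the induced map on the quotients $C_{\{x\}}/\rad(C_{\{x\}})\to C_{\{\phi(x)\}}/\rad(C_{\{\phi(x)\}})$ together with Theorem~\ref{commcase}. The only cosmetic differences (deducing (v) from multiplicativity and injectivity rather than directly from the $m=3$ identity, and handling $\lambda=0$ via $\phi(0)=0$) do not change the argument.
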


\begin{proof}
(i) Suppose $\phi(x)=\phi(y)$. Then, using \eqref{assumption}, we have
\begin{equation*}
\sigma\left(\phi(x)\phi(z)\right)=\sigma\left(\phi(y)\phi(z)\right)\Rightarrow
\sigma\left(xz\right)=\sigma\left(yz\right)\mbox{ for all }z\in A.
\end{equation*}
 Since $A$ is semisimple Theorem~\ref{unique} gives $x=y$.\\
(ii) Let $\phi(x)\in\rad(B)$. Then, for any $z\in A$, we have
   	$$\sigma\left(xz\right)=\sigma\left(\phi(x)\phi(z)\right)=\{0\}.$$ 
   	Since $A$ is semisimple $x=0$. This shows that $\rad(B)$ is a singleton, which necessarily implies that $B$ is semisimple.\\
(iii) Fix $x,y\in A$. Then 
\begin{equation*}
\sigma\left(\phi(x)\phi(y)\phi(z)\right)=\sigma\left(xyz\right)=\sigma\left(\phi(xy)\phi(z)\right)\mbox{ for all }z\in A.
 \end{equation*}   	
 Since $\phi$ is surjective and $B$ is semisimple Theorem~\ref{unique} gives $\phi(xy)=\phi(x)\phi(y)$.\\
(iv) The first part follows from 
\begin{equation*}
\sigma(\phi(\mathbf 1)\phi(z))=\sigma(z)=\sigma(\mathbf 1\phi(z))\mbox{ for all }z\in A.
\end{equation*}
and the second part from
\begin{equation*}
\sigma(\phi(\lambda x)\phi(z))=\sigma(\lambda xz)=\sigma(\lambda\phi(x)\phi(z))\mbox{ for all }z\in A.
\end{equation*}
(v) If $xy=yx$ then 
 	\begin{equation*}
 	\sigma\left(\phi(x)\phi(y)\phi(z)\right)=\sigma\left(xyz\right)=\sigma\left(yxz\right)=\sigma\left(\phi(y)\phi(x)\phi(z)\right)\mbox{ for all }z\in A.
 	\end{equation*}
 So, as with (iii), we have $\phi(x)\phi(y)=\phi(y)\phi(x)$. The reverse implication is then obvious.\\
 (vi) Let $y\in C_{\{x\}}$, and suppose $\phi(a)\in B$ commutes with $\phi(x)$. By (v) $a$ commutes with $x$, and therefore with $y$. Again by (v) it follows that $\phi(y)$ commutes with $\phi(a)$ which proves the forward implication. The reverse implication holds similarly.\\
 (vii) Define 
 \begin{equation*}
 \widetilde{\phi}:C_{\{x\}}\big/\rad\left(C_{\{x\}}\right)\rightarrow C_{\{\phi(x)\}}\big/\rad\left(C_{\{\phi(x)\}}\right) 
 \end{equation*}
 by
 \begin{equation*}
 \widetilde\phi\left(w+\rad\left(C_{\{x\}}\right)\right)=\phi(w)+\rad\left(C_{\{\phi(x)\}}\right).
 \end{equation*}
 If $w,u\in C_{\{x\}}$ and $r\in\rad\left(C_{\{x\}}\right)$ with $w=u+r$ then, for each $y\in C_{\{x\}}$, 
 \begin{align*}
 wy=uy+ry&\Rightarrow\sigma(wy)=\sigma(uy+ry)\\&
 \Rightarrow\sigma(wy)=\sigma(uy)\\&
 \Rightarrow\sigma(\phi(w)\phi(y))=\sigma(\phi(u)\phi(y)).
 \end{align*}
 It follows from Theorem\ref{unique} that $\phi(w)-\phi(u)\in\rad\left(C_{\{\phi(x)\}}\right)$ and hence that $\widetilde\phi$ is well-defined. Since 
\begin{equation*}
\widetilde\phi\left(\mathbf 1+\rad\left(C_{\{x\}}\right)\right)=\mathbf 1+\rad\left(C_{\{\phi(x)\}}\right),
\end{equation*}
and
\begin{equation*}
\sigma\left(\widetilde{\phi}(a)\widetilde{\phi}(b)\right)=\sigma(ab)\mbox{ for all }a,b\in C_{\{x\}}\big/\rad\left(C_{\{x\}}\right)
\end{equation*}
Theorem~\ref{commcase} implies that $\widetilde{\phi}$ is an isomorphism.
\end{proof}

\bigskip

\section{A spectral characterization of isomorphisms on $C^\star$-algebras}

If $A$ is a $C^\star$-algebra, then we denote the real Banach space of self-adjoint elements of $A$ by 
$\Sa$. As usual we denote the real and imaginary parts of $x\in A$ by respectively 
$$\Rea x:=(x+x^\star)/2\mbox{ and }\Ima x:=(x-x^\star)/2i.$$

\begin{theorem}\label{main}
	Let $A$ be a $C^\star$-algebra, and $B$ be a Banach algebra. Then a surjective map $\phi:A\rightarrow B$ is an isomorphism from $A$ onto $B$ if and only if $\phi$ satisfies the following properties: 
	\begin{itemize}
	\item[(i)]{$\sigma\left(\prod_{i=1}^m\phi(x_i)\right)=\sigma\left(\prod_{i=1}^mx_i\right)$  for all $x_i\in A$, $1\leq m\leq3$.}
	\item[(ii)]{ $\phi$ is continuous at $\mathbf 1$.}	
	\end{itemize}
\end{theorem}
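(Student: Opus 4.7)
The forward direction is routine: an algebra isomorphism preserves invertibility, hence spectra of arbitrary finite products, so (i) holds; and since $A$ is a $C^\star$-algebra (hence semisimple) and $B$, being algebra-isomorphic to $A$, is also semisimple, Johnson's uniqueness-of-norm theorem forces $\phi$ to be continuous, so (ii) holds. For the converse, assume (i) and (ii). Theorem~\ref{properties} already delivers bijectivity, multiplicativity, $\phi(\mathbf{1}) = \mathbf{1}$, $\mathbb{C}$-homogeneity, and semisimplicity of $B$; the $m=3$ hypothesis is essential here, as Example~\ref{counter} demonstrates. What remains is $\mathbb{R}$-additivity of $\phi$, which I plan to extract from the Lie-Trotter formula combined with the $C^\star$-structure on $A$.

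First I upgrade continuity from $\mathbf{1}$ to the open invertible group $G(A)$: if $z_n \to z \in G(A)$, then $z^{-1}z_n \to \mathbf{1}$, so by (ii) and multiplicativity $\phi(z)^{-1}\phi(z_n) = \phi(z^{-1}z_n) \to \mathbf{1}$, giving $\phi(z_n) \to \phi(z)$. Next, for each $h \in \Sa$, the bicommutant $C_{\{h\}}$ is $\star$-closed (because $h = h^\star$), hence a commutative unital $C^\star$-subalgebra of $A$, in particular semisimple. By (iii), (iv), (vi) of Theorem~\ref{properties}, the restriction $\phi|_{C_{\{h\}}} \colon C_{\{h\}} \to C_{\{\phi(h)\}}$ is a unital surjective multiplicative map satisfying $\sigma(\phi(a)\phi(b)) = \sigma(ab)$, so Theorem~\ref{commcase} forces it to be an algebra isomorphism with $C_{\{\phi(h)\}}$ also semisimple. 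Uniqueness of complete norm on semisimple commutative Banach algebras makes this iso continuous, and hence preserving of the exponential series: $\phi(e^{th}) = e^{t\phi(h)}$ for all $h \in \Sa$, $t \in \mathbb{C}$.

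Combining this with Lie-Trotter and continuity on $G(A)$, for $h, k \in \Sa$ I get $\phi(e^{t(h+k)}) = \lim_n\bigl(\phi(e^{th/n})\phi(e^{tk/n})\bigr)^n = \lim_n\bigl(e^{t\phi(h)/n}e^{t\phi(k)/n}\bigr)^n = e^{t(\phi(h)+\phi(k))}$, while $h+k \in \Sa$ also gives $\phi(e^{t(h+k)}) = e^{t\phi(h+k)}$; equating and differentiating at $t = 0$ yields $\phi(h+k) = \phi(h) + \phi(k)$, i.e., $\mathbb{R}$-additivity on $\Sa$. For a general $x = h + ik$ with $h, k \in \Sa$, the same Lie-Trotter argument (with $\phi(e^{itk/n}) = e^{it\phi(k)/n}$, valid since $itk/n \in C_{\{k\}}$) delivers $\phi(e^{tx}) = e^{t(\phi(h)+i\phi(k))}$.

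The main obstacle is promoting this one-parameter-group identity to the pointwise equality $\phi(x) = \phi(h) + i\phi(k)$. The plan is to introduce the candidate $\psi(z) := \phi(\Re z) + i\phi(\Im z)$, which is $\mathbb{C}$-linear by the just-established additivity on $\Sa$ and $\mathbb{C}$-homogeneity, and to prove $\psi$ is multiplicative by comparing $ts$-coefficients in $\phi((\mathbf{1}+tx)(\mathbf{1}+sy)) = \phi(\mathbf{1}+tx)\phi(\mathbf{1}+sy)$ using the Gâteaux-derivative relation $\phi(\mathbf{1}+tz) = \mathbf{1} + t\psi(z) + o(t)$ (a consequence of applying the Lie-Trotter argument to $(\mathbf{1}+tz/n)^n \to e^{tz}$). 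Then $\psi$ is a unital algebra homomorphism, and the exponential identity gives $\phi(e^{tx}) = e^{t\psi(x)} = \psi(e^{tx})$, so $\phi$ and $\psi$ coincide on the set of exponentials, and hence on the connected component $G(A)^0$ by multiplicativity. Extending $\phi = \psi$ from $G(A)^0$ to all of $A$ is the delicate final step; the intended route is through Theorem~\ref{unique} in the semisimple algebra $B$, combined with the $C^\star$-structure of $A$ (for instance through polar decomposition $x = u|x|$ of invertibles, where both $u$ and $|x|$ are already controlled by the self-adjoint exponential identity), together with surjectivity of $\phi$ to ensure enough test elements.
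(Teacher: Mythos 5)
Your skeleton follows the paper's (continuity on $G(A)$, bicommutant plus Theorem~\ref{commcase} to get exponential identities, Lie--Trotter for additivity on $\Sa$, the candidate $\psi(z)=\phi(\Rea z)+i\phi(\Ima z)$, then identification of $\phi$ with $\psi$), but two steps do not go through as written. First, the multiplicativity of $\psi$. The expansion $\phi(\mathbf 1+tz)=\mathbf 1+t\psi(z)+o(t)$ is not a consequence of $\lim_n\phi(\mathbf 1+tz/n)^n=e^{t\psi(z)}$ (you cannot take $n$-th roots of a limit statement), and a first-order relation is in any case too weak to isolate a $ts$-coefficient inside $\phi(\mathbf 1+tx+sy+ts\,xy)$. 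The honest version of your idea is the exact identity $\phi(\mathbf 1+u)=e^{\psi(\log(\mathbf 1+u))}$, which does follow from your $\phi(e^w)=e^{\psi(w)}$ \emph{provided} $\psi$ is continuous (a fact you never establish; the paper derives it from continuity of $\phi$ at $\mathbf 1$ plus linearity on $\Sa$). But if you carry that expansion out, the $-\tfrac12 u^2$ term of the logarithm contributes $-\tfrac12(xy+yx)$, and comparing bidegree-$(1,1)$ coefficients of the two sides yields only $\psi(xy-yx)=\psi(x)\psi(y)-\psi(y)\psi(x)$, i.e.\ the Lie half, not multiplicativity. The missing ingredient is the Jordan half $\psi(xy+yx)=\psi(x)\psi(y)+\psi(y)\psi(x)$; the paper gets it cheaply from $\phi(ab)+\phi(ba)=\phi(ab+ba)$ for $a,b\in\Sa$ (expand $\phi((a+b)^2)$ using additivity on $\Sa$ and multiplicativity), whence $\psi(x^2)=\psi(x)^2$, then $\psi(e^x)=e^{\psi(x)}$ via $2^m$-th powers and continuity, which makes the quadratic comparison exact and gives full multiplicativity. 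Without this your argument stalls at ``$\psi$ is a Lie homomorphism.''

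Second, the final identification $\phi=\psi$ is only an ``intended route,'' and the sketch has a real hole: agreement on exponentials gives agreement on the principal component $G(A)^0$, which is in general strictly smaller than $G(A)$ (already for $A=C(S^1)$), and in the polar decomposition $a=u|a|$ the unitary $u$ is \emph{not} in general of the form $e^{ik}$ with $k\in\Sa$, so it is not ``controlled by the self-adjoint exponential identity.'' Your deliberate restriction to self-adjoint bicommutants (avoiding Fuglede--Putnam--Rosenblum) is exactly what blocks this step: the paper invokes FPR so that $C_{\{x\}}$ is a commutative $C^\star$-subalgebra for every \emph{normal} $x$, obtains $\phi(e^x)=e^{\phi(x)}$ for normal $x$, and then compares with $\phi(e^{\lambda x})=e^{\lambda\psi_\phi(x)}$ for all $\lambda\in\mathbb C$ and differentiates to get $\phi=\psi_\phi$ on all normal elements; only then does polar decomposition give agreement on $G(A)$, hence surjectivity of $\psi_\phi$ (writing $b=\lambda\mathbf 1+\phi(a)$ with $a\in G(A)$), and finally Theorem~\ref{unique}, tested against the exponentials of $B$ (which are exactly the $\psi_\phi$-images of exponentials of $A$), gives $\phi=\psi_\phi$ on all of $A$. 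These two repairs --- the Jordan identity with continuity of $\psi$, and the treatment of normal (not just self-adjoint) elements --- are precisely what your proposal is missing.
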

	
\begin{proof} If $\phi$ is an isomorphism from $A$ onto $B$, then (i) follows trivially, and (ii) follows from Johnson's Continuity Theorem \cite[Corollary 5.5.3]{aupetit1991primer}. We prove the reverse implication:
Let $a\in G(A)$. If $x_n\rightarrow a$, then $x_na^{-1}\rightarrow\mathbf 1$. By continuity of $\phi$ at $\mathbf 1$, together with Theorem~\ref{properties}(iii), it follows that
\begin{equation*}
\lim_n\phi(x_n)\phi\left(a^{-1}\right)=\lim\phi\left(x_na^{-1}\right)=\phi(\mathbf 1)=\mathbf 1.
\end{equation*}	
Hence $\lim_n\phi(x_n)=\phi(a)$, and so $\phi$ is continuous on $G(A).$ 
If $x\in A$ is normal, then, by the Fuglede-Putnam-Rosenblum Theorem \cite[Theorem 6.2.5]{aupetit1991primer}, $C_{\{x\}}$ is a commutative $C^*$-subalgebra of $A$, and hence semisimple. From Theorem~\ref{properties}(vi) it follows that the map 
\begin{equation*}
\widehat\phi:C_{\{x\}}\rightarrow C_{\{\phi(x)\}} 
\end{equation*}  
defined by $\widehat\phi(y)=\phi(y)$ for $y\in C_{\{x\}}$ is surjective, and it satifies \eqref{assumption}. So, by Theorems~\ref{properties} and~\ref{commcase}, we have that $\widehat\phi$ is an isomorphism; from this we may conclude that
\begin{equation}\label{normal}
 \phi\left(e^{x}\right)=e^{\phi(x)}\mbox{ if }x\mbox{ is normal. } 
 \end{equation}
 If $x,y\in\Sa$, then, using the Lie-Trotter Formula together with the continuity of $\phi$ on $G(A)$,  
\begin{align*}
e^{\phi(x+y)}&=\phi\left(e^{x+y}\right)=\phi\left(\lim_n\left(e^{\frac{x}{n}}e^{\frac{y}{n}}\right)^n\right)\\&=\lim_n\phi\left(\left(e^{\frac{x}{n}}e^{\frac{y}{n}}\right)^n\right)=\lim_n\left(e^{\frac{\phi(x)}{n}}e^{\frac{\phi(y)}{n}}\right)^n\\&=e^{\phi(x)+\phi(y)}
\end{align*}  
Replacing $x,y$ in the above equation by respectively $tx,ty$ where $t\in\mathbb R$ it follows that $e^{t\phi(x+y)}=e^{t\left(\phi(x)+\phi(y)\right)}$ from which differentiation with respect to $t$ gives $\phi(x+y)=\phi(x)+\phi(y)$. Thus, the restriction of $\phi$ to $\Sa$ is a linear map. Define now an auxiliary map
 $\psi_\phi:A\rightarrow B$ by 
 \begin{equation}\label{aux}
 \psi_\phi(x):=\phi\left(\Rea x\right)+i\phi\left(\Ima x\right).
 \end{equation}
Since the restriction of $\phi$ to the real Banach space $\mathcal S$ is linear, and $\mathbf 1\in\mathcal S$, continuity of $\phi$ at $\mathbf 1$ implies continuity of the restriction of $\phi$ to $\mathcal S.$ Therefore, since $x\mapsto\Rea x$ and $x\mapsto\Ima x$ are continuous on $A$, $\psi_\phi$ is continuous on $A$.  
Using the fact that $\phi$ is additive on $\Sa$, together with Theorem~\ref{properties}(iv), it is a  simple matter to show that $\psi_\phi$ is linear on $A$. Let $x=a+ib$ where $a=\Rea x$ and $b=\Ima x$. Then, since $a,b\in\Sa$, and since $\phi$ is multiplicative
 \begin{align*}
 \psi_\phi(x)^2&=\left(\phi(a)+i\phi(b)\right)^2\\&=
 \phi(a^2-b^2)+i\left[\phi(ab)+\phi(ba)\right],
 \end{align*}
 and
  \begin{align*}
  \psi_\phi(x^2)=
  \phi(a^2-b^2)+i\phi(ab+ba)
  \end{align*}
 But, since $\phi$ is additive on $\Sa$ and multiplicative
 \begin{equation*}
 \phi\left((a+b)^2\right)=\phi(a+b)^2=\left(\phi(a)+\phi(b)\right)^2
 \end{equation*} 
 from which expansion and comparison imply that 
 $\phi(ab)+\phi(ba)=\phi(ab+ba)$. Thus $\psi_\phi(x)^2=\psi_\phi(x^2)$ holds for each $x\in A$. By induction it then follows that $\psi_\phi(x)^{2^m}=\psi_\phi\left(x^{2^m}\right)$ holds for each $m\in\mathbb N$. From this we obtain
 \begin{align*}
 \psi_\phi\left(e^x\right)&=\psi_\phi\left(\lim_m\left(\mathbf 1+x/2^m\right)^{2^m}\right)=\lim_n\psi_\phi\left(\left(\mathbf 1+x/2^m\right)^{2^m}\right)\\&=
 \lim_n\psi_\phi\left(\mathbf 1+x/2^m\right)^{2^m}=\lim_n\left(\mathbf 1+\psi_\phi(x)/2^m\right)^{2^m}\\&=e^{\psi_\phi(x)}.
 \end{align*}
 But we also have 
 \begin{align*}
 \phi\left(e^x\right)&=\phi\left(e^{\Rea x+i\Ima x}\right)=\phi\left(\lim_n\left(e^{\frac{\Rea x}{n}}e^{\frac{i\Ima x}{n}}\right)^n\right)\\&=\lim_n\phi\left(\left(e^{\frac{\Rea x}{n}}e^{\frac{i\Ima x}{n}}\right)^n\right)=\lim_n\left(\phi\left(e^{\frac{\Rea x}{n}}\right)\phi\left(e^{\frac{i\Ima x}{n}}\right)\right)^n\\&=
 \lim_n\left(e^{\frac{\phi(\Rea x)}{n}}e^{\frac{i\phi(\Ima x)}{n}}\right)^n=e^{\phi\left(\Rea x\right)+i\phi\left(\Ima x\right)}\\&=e^{\psi_\phi(x)}
 \end{align*}
  Therefore
  \begin{align}\label{expeq}
  \phi\left(e^x\right)=e^{\psi_\phi(x)}= \psi_\phi\left(e^x\right)\mbox{ for all }x\in A.
  \end{align} 
  Now, if $x\in A$ is normal, then, by combining \eqref{normal} and 
  \eqref{expeq}, we obtain 
  \begin{equation*}
  e^{\lambda\phi(x)}=\phi\left(e^{\lambda x}\right)=e^{\lambda\psi_\phi(x)},\ \lambda\in\mathbb C
  \end{equation*}
  from which it follows that
  \begin{equation}\label{normal2}
  \phi(x)=\psi_\phi(x)\mbox{ if }x\mbox{ is normal. } 
  \end{equation}
  
   Let $x,y\in A$ be arbitrary. If $t>0$ is sufficiently small (which we keep fixed), then each of the sets $\sigma(\mathbf 1+tx)$, $\sigma(\mathbf 1+ty)$ and $\sigma\left((\mathbf 1+tx)(\mathbf 1+ty)\right)$ does not contain zero, and does not separate $0$ from infinity. So it follows from \cite[Theorem 3.3.6]{aupetit1991primer} that there exists
  $a,b,c\in A$ such that 
  \begin{equation*}
  \mathbf 1+tx=e^a,\ \mathbf 1+ty=e^b, \mbox{ and }(\mathbf 1+tx)(\mathbf 1+ty)=e^c
  \end{equation*}
  So by \eqref{expeq} it follows, on the one hand, that
  \begin{align*}
  \phi\left((\mathbf 1+tx)(\mathbf 1+ty)\right)&=\psi_\phi\left(\mathbf 1+t(x+y)+t^2xy\right)\\&=\mathbf 1+t(\psi_\phi(x)+\psi_\phi(y))+t^2\psi_\phi(xy),
  \end{align*} 
  and on the other hand, using the fact that $\phi$ is multiplicative together with \eqref{expeq}, that 
    \begin{align*}
    \phi\left((\mathbf 1+tx)(\mathbf 1+ty)\right)&=\phi(\mathbf 1+tx)\phi(\mathbf 1+ty)\\&=\psi_\phi(\mathbf 1+tx)\psi_\phi(\mathbf 1+ty)\\&=(\mathbf 1+t\psi_\phi(x))(\mathbf 1+t\psi_\phi(y))\\&=\mathbf 1+t(\psi_\phi(x)+\psi_\phi(y))+t^2\psi_\phi(x)\psi_\phi(y).
    \end{align*} 
 Comparison of the two expressions yields 
  $\psi_\phi(xy)=\psi_\phi(x)\psi_\phi(y),$ and so $\psi_\phi$ is linear and multiplicative. We proceed to show that $\psi_\phi$ is surjective: Let $a\in G(A)$. Then $a$ admits a polar decomposition $a=hu$ where $h\in\Sa$ and $u$ is unitary. So, since both $h$ and $u$ are normal, it follows from \eqref{normal2} that  
  \begin{equation}\label{inveq}
  \phi(a)=\phi(h)\phi(u)=\psi_\phi(h)\psi_\phi(u)=\psi_\phi(hu)=\psi_\phi(a),
  \end{equation}
  and so $\psi_\phi$ agrees with $\phi$ on $G(A)$. Notice further, since $\phi$ is a spectrum preserving multiplicative bijection from $A$ onto $B$, we have that $\phi\left(G(A)\right)=G(B)$. Let $b\in B$ be arbitrary, and fix $\lambda\in\mathbb C$ such that $|\lambda|>\rho(b)$. If we write
  $b=\lambda\mathbf 1+(b-\lambda\mathbf 1)$ and observe that both terms in the decomposition belong to $G(A),$ then it follows that $b-\lambda\mathbf 1=\phi(a)$ for some $a\in G(A)$ and  $\lambda\mathbf 1=\phi(\lambda\mathbf 1)$. Hence, from \eqref{inveq}, we have that
  \begin{equation*}
  b=\phi(\lambda\mathbf 1)+\phi(a)=\psi_\phi(\lambda\mathbf 1)+\psi_\phi(a)
  =\psi_\phi(\lambda\mathbf 1+a)
  \end{equation*}
  which shows that $\psi_\phi$ is surjective. So, since $\psi_\phi$ is now an isomorphism from $A$ onto $B$, to obtain the required result it suffices to show that $\psi_\phi$ agrees with $\phi$ everywhere on $A$: Fix $x\in A$ arbitrary. Then for each $y\in A$ 
  \begin{equation}\label{equality}
  \sigma(\phi(x)\phi(y))=\sigma(xy)=\sigma(\psi_\phi(x)\psi_\phi(y))
  \end{equation}
  In particular \eqref{equality} and \eqref{expeq} imply that for each $y\in A$ 
  \begin{equation*}
    \sigma\left(\phi(x)\psi_\phi\left(e^y\right)\right)=\sigma\left(\phi(x)\phi\left(e^y\right)\right)=\sigma\left(\psi_\phi(x)\psi_\phi\left(e^y\right)\right)
  \end{equation*}
  Since the exponentials in $B$ are precisely the images of the exponentials in $A$ under $\psi_\phi$ it follows from Theorem~\ref{unique} that $\phi(x)=\psi_\phi(x)$ and hence $\phi=\psi_\phi$.  
  
\end{proof}

\begin{theorem}\label{main2}
	Let $A$ be a $C^\star$-algebra, and $B$ be a Banach algebra. Then $A$ is isomorphic to $B$ if and only if there exists a surjective map $\phi:A\rightarrow B$ such that
	\begin{itemize}
		\item[(i)]{$\sigma\left(\phi(x)\phi(y)\phi(z)\right)=\sigma\left(xyz\right)$  for all $x,y,z\in A$.}
		\item[(ii)]{ $\phi$ is continuous at $\mathbf 1$.}	
	\end{itemize}
\end{theorem}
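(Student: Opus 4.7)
The forward implication is immediate, so I focus on the reverse. The plan is to produce from $\phi$ a surjective map $\widetilde\phi \colon A \to B$ satisfying both hypotheses of Theorem~\ref{main}; an application of Theorem~\ref{main} to $\widetilde\phi$ then delivers the desired isomorphism. Since hypothesis~(i) here is weaker than \eqref{assumption} (it only controls the triple product), the main technical work lies in showing that $u := \phi(\mathbf 1)$ is central in $B$ and satisfies $u^3 = \mathbf 1$; one can then define $\widetilde\phi(x) := \phi(x)u^2$ and apply Theorem~\ref{main} to it.

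First, following the pattern of Theorem~\ref{properties}(i)--(ii), I would set an extra variable equal to $\mathbf 1$ to collapse the triple-product identity to an $m = 2$ identity, and apply Theorem~\ref{unique} in $A$ to conclude that $\phi$ is injective and that $B$ is semisimple. Next, starting from
\begin{equation*}
\sigma(\phi(x)\phi(y)\phi(z)) = \sigma(xyz) = \sigma(\phi(xy)\phi(\mathbf 1)\phi(z)) = \sigma(\phi(xy)\,u\,\phi(z)),
\end{equation*}
and using surjectivity of $\phi$, I would deduce $\sigma(\phi(x)\phi(y)w) = \sigma(\phi(xy)uw)$ for every $w \in B$. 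An application of Theorem~\ref{unique} in the semisimple algebra $B$ then yields the twisted-multiplicative identity $\phi(x)\phi(y) = \phi(xy)u$. Specializing $x = \mathbf 1$ gives $u\phi(y) = \phi(y)u$ for all $y \in A$, so surjectivity of $\phi$ forces $u \in Z(B)$.

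The main obstacle is to upgrade $\sigma(u^3) = \{1\}$ (an instance of~(i) at $x = y = z = \mathbf 1$) to the algebraic identity $u^3 = \mathbf 1$. Since $u$ is central, $u^3 - \mathbf 1$ is central with spectrum $\{0\}$; for any $b \in B$, centrality and submultiplicativity give
\begin{equation*}
\rho\bigl((u^3-\mathbf 1)b\bigr) = \lim_n \bigl\|(u^3-\mathbf 1)^n b^n\bigr\|^{1/n} \le \rho(u^3-\mathbf 1)\,\rho(b) = 0,
\end{equation*}
so $u^3 - \mathbf 1 \in \rad(B) = \{0\}$, whence $u^3 = \mathbf 1$ and in particular $u^{-1} = u^2$.

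Finally, set $\widetilde\phi(x) := \phi(x)u^2$. Surjectivity is clear (since $u$ is invertible), and continuity of $\widetilde\phi$ at $\mathbf 1$ is inherited from that of $\phi$, together with $\widetilde\phi(\mathbf 1) = u^3 = \mathbf 1$. Using $u \in Z(B)$ and $u^3 = \mathbf 1$,
\begin{equation*}
\prod_{i=1}^m \widetilde\phi(x_i) = \Bigl(\prod_{i=1}^m \phi(x_i)\Bigr)\, u^{2m},
\end{equation*}
where $u^{2m}$ reduces to $u^2$, $u$, and $\mathbf 1$ for $m = 1, 2, 3$ respectively; in each case the spectrum of the right-hand side equals $\sigma\bigl(\prod_{i=1}^m x_i\bigr)$ by the instance of hypothesis~(i) in which the missing factors are filled by $\mathbf 1$. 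Hence $\widetilde\phi$ satisfies the hypotheses of Theorem~\ref{main}, which produces the desired isomorphism from $A$ onto $B$.
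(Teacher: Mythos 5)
Your proof is correct and takes essentially the same route as the paper: establish that $B$ is semisimple, that $u=\phi(\mathbf 1)$ is central with $u^3=\mathbf 1$, and then feed the corrected map $\widetilde\phi(x)=\phi(x)u^2=\phi(\mathbf 1)^{-1}\phi(x)$ into Theorem~\ref{main}. Your minor variations --- deriving centrality by specializing the identity $\phi(x)\phi(y)=\phi(xy)u$ at $x=\mathbf 1$, proving $u^3=\mathbf 1$ via the central-quasinilpotent-implies-radical computation instead of the quasinilpotent-perturbation characterization the paper cites, and verifying the spectral hypothesis for $m=1,2,3$ directly by filling the remaining slots with $\mathbf 1$ rather than first proving multiplicativity --- are all sound.
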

\begin{proof}
Let $\phi(x)\in\rad(B)$. Then, for each $z\in A$,
\begin{equation*}
\sigma\left(\phi(x)\phi(\mathbf 1)\phi(z)\right)=\{0\}\Rightarrow\sigma(xz)=\{0\}.
\end{equation*} 
Since $A$ is semisimple $x=0$ whence it follows that $\rad(B)$ is a singleton, and consequently that $B$ is semisimple. If $a,b\in A$ satisfy $\phi(a)\phi(b)=\phi(b)\phi(a)$, then, for each $x\in A$, we have	
\begin{equation*}
\sigma\left(\phi(a)\phi(b)\phi(x)\right)=\sigma(abx)=\sigma(bax)=\sigma\left(\phi(b)\phi(a)\phi(x)\right)
\end{equation*}
which, by Theorem~\ref{unique}, implies that $ab=ba$. Since $\phi$ is surjective a similar argument proves that if $a,b\in A$ satisfy $ab=ba$, then $\phi(a)\phi(b)=\phi(b)\phi(a)$. This shows that $\phi(\mathbf 1)\in Z(B)$. So, together with the fact that $\sigma\left(\phi(\mathbf 1)^3\right)=\{1\},$ it follows by the semisimplicity of $B$ that $\phi(\mathbf 1)^3=\mathbf 1$; to see this, observe that, for any quasinilpotent element $q\in B$, we have $$\sigma\left(\phi(\mathbf 1)^3+q\right)\subseteq\sigma\left(\phi(\mathbf 1)^3\right)+\sigma(q)=\{1\},$$ and then apply
\cite[Theorem 5.3.2]{aupetit1991primer} to conclude that $\phi(\mathbf 1)^3=\mathbf 1$. 
If we fix $x,y\in A$ arbitrary then, by the assumption (i),
\begin{equation*}
\sigma\left(\phi(x)\phi(y)\phi(z)\right)=\sigma\left(xyz\right)=\sigma\left(\phi(\mathbf 1)\phi(xy)\phi(z)\right)\mbox{ for all }z\in A.
\end{equation*} 
Since $\phi$ is surjective Theorem~\ref{unique} gives 
$\phi(\mathbf 1)^{-1}\phi(x)\phi(y)=\phi(xy)$ for all $x,y\in A$.
Define
\begin{equation}\label{isodef}
\psi(x):=\phi(\mathbf 1)^{-1}\phi(x),\ \ x\in A.
\end{equation}
 From \eqref{isodef}, using the fact that $\phi(\mathbf 1)^{-1}\in Z(B)$,  it follows that
 \begin{align*}
 \psi(x)\psi(y)&=\phi(\mathbf 1)^{-1}\phi(x)\phi(\mathbf 1)^{-1}\phi(y)=\phi(\mathbf 1)^{-2}\phi(x)\phi(y)\\&=\phi(\mathbf 1)^{-1}\phi(xy)= \psi(xy)
 \end{align*}
for all $x,y\in A$. Thus $\psi$ is multiplicative. Observe then that
\[\psi(x)=\phi(\mathbf 1)^3\psi(x)=\phi(\mathbf 1)^2\phi(x)\]   
whence it follows, from (i), that 
 \[\sigma(\psi(x))=\sigma\left(\phi(\mathbf 1)^2\phi(x)\right)=\sigma(x).\] 
 Thus, the hypothesis (i) in Theorem~\ref{main} holds for $\psi$ with $m=1$. For $m=2,3$ this condition is consequently satisfied since $\psi$ is multiplicative. Obviously, continuity of $\psi$ at $\mathbf 1$ follows from continuity of $\phi$ at $\mathbf 1$, and surjectivity of $\psi$ from surjectivity of $\phi$ together with invertibility of $\phi(\mathbf 1)$. So $\psi$ satisfies the hypothesis, and hence the conclusion, of Theorem~\ref{main}.       	
\end{proof}
 In Theorem~\ref{main2}, one cannot expect $\phi$ to be an isomorphism: 
\begin{example}
	Let $A=B=M_2(\mathbb C)$ and define $\phi:A\rightarrow B$ by $\phi(a)=(-1/2+i\sqrt3/2)a$. Then $\sigma(\phi(x)\phi(y)\phi(z))=\sigma(xyz)$, for all $x,y,z\in A$, $\phi$ is surjective and $\phi$ is continuous. But $\phi$ is not an isomorphism. It is also not necessarily true that $\phi$ must be a multiple of the identity map (if $A=B$); to see this, take the $C^\star$-algebra $A=B=M_2(\mathbb C)\oplus\mathbb C$ and define $\phi:A\rightarrow B$ by $\phi((a,b))=((-1/2+i\sqrt3/2)a,b)$.  
\end{example} 

However, as a simple consequence of Theorem~\ref{main2}, we can now improve on Theorem~\ref{main}. For example we only require the spectral assumption to hold for $m=2,3$: 

\begin{theorem}\label{main3}
	Let $A$ be a $C^\star$-algebra, and $B$ be a Banach algebra. Then a surjective map $\phi:A\rightarrow B$ is an isomorphism from $A$ onto $B$ if and only if $\phi$ satisfies the following properties: 
	\begin{itemize}
		\item[(i)]{$\sigma\left(\prod_{i=1}^m\phi(x_i)\right)=\sigma\left(\prod_{i=1}^mx_i\right)$  for all $x_i\in A$, $2\leq m\leq3$.}
		\item[(ii)]{ $\phi$ is continuous at $\mathbf 1$.}	
	\end{itemize}
\end{theorem}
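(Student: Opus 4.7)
My strategy is to reduce Theorem~\ref{main3} to Theorem~\ref{main2}. The forward implication is routine: (i) holds trivially if $\phi$ is an isomorphism, and (ii) follows from Johnson's continuity theorem. For the reverse, observe that surjectivity together with the $m=3$ spectral identity and continuity at $\mathbf{1}$ are exactly the hypotheses of Theorem~\ref{main2}. From the construction in the proof of Theorem~\ref{main2} one extracts the following: $B$ is semisimple, the element $u := \phi(\mathbf{1})$ lies in $Z(B)$ with $u^3 = \mathbf{1}$, and the map $\psi(x) := u^{-1}\phi(x)$ is an isomorphism from $A$ onto $B$. It therefore suffices to show $u = \mathbf{1}$, for then $\psi = \phi$.

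This is exactly where the additional $m=2$ hypothesis comes in. Setting $x = y = \mathbf{1}$ in (i) with $m = 2$ gives $\sigma(u^2) = \{1\}$, so the polynomial spectral mapping theorem yields $\sigma(u) \subseteq \{1, -1\}$. On the other hand, $u^3 = \mathbf{1}$ forces $\sigma(u)$ to consist of cube roots of unity. The intersection is $\{1\}$, so $u - \mathbf{1}$ is quasinilpotent. Since $u$, and hence $u - \mathbf{1}$, is central, the commuting spectral radius inequality gives $\rho((u - \mathbf{1})b) \leq \rho(u - \mathbf{1})\rho(b) = 0$ for every $b \in B$, placing $u - \mathbf{1}$ in $\rad(B)$. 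By semisimplicity of $B$ we conclude $u = \mathbf{1}$, and hence $\phi = \psi$ is the required isomorphism.

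With Theorem~\ref{main2} doing the structural heavy lifting, the remaining argument is just a short spectral computation, and I do not anticipate any serious obstacle. The only conceptual point worth isolating is the observation that a \emph{central} quasinilpotent element of a Banach algebra lies in the Jacobson radical, which is precisely what lets us pass from $\sigma(u) = \{1\}$ to $u = \mathbf{1}$ in a semisimple $B$.
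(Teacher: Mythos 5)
Your proposal is correct and follows essentially the same route as the paper: reduce to Theorem~\ref{main2} via the $m=3$ hypothesis to get the isomorphism $\psi(x)=\phi(\mathbf 1)^{-1}\phi(x)$, then use the $m=2$ hypothesis and the spectral mapping theorem to force $\sigma(\phi(\mathbf 1))=\{1\}$, and conclude $\phi(\mathbf 1)=\mathbf 1$ from centrality and semisimplicity of $B$. The only difference is cosmetic: you spell out the ``central quasinilpotent lies in $\rad(B)$'' justification that the paper leaves implicit.
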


\begin{proof}
With $m=3$ we have, as in the proof of Theorem~\ref{main2}, that $\psi:A\rightarrow B$ defined by $\psi(x):=\phi(\mathbf 1)^{-1}\phi(x),$ $x\in A$ is an isomorphism. With $m=2$ the Spectral Mapping Theorem says that each $\lambda\in\sigma\left(\phi(\mathbf 1)\right)$ is a cube root as well as a square root of $1$ from which it follows that $\sigma\left(\phi(\mathbf 1)\right)=\{1\}$. Since $\phi(\mathbf 1)\in Z(B),$ and $B$ is semisimple, $\phi(\mathbf 1)=\mathbf 1$ and hence $\phi$ is an isomorphism. 
 	
\end{proof}	

\begin{remark}
In Theorem~\ref{commcase}, removing the requirement $\phi(\mathbf 1)=\mathbf 1$ from the hypothesis  would still give the result that $B$ is semisimple, and that $A$ and $B$ are isomorphic, but with the isomorphism given by the map $\psi(x):=\phi(\mathbf 1)^{-1}\phi(x)=\phi(\mathbf 1)\phi(x)$; this can be shown using essentially the same arguments as in the proof of Theorem~\ref{main2} above.   	
\end{remark}

\begin{remark}In \cite{specproperties} Bre\v{s}ar and \v{S}penko 
consider the condition $\sigma\left(\phi(x)\phi(y)\phi(z)\right)=\sigma\left(xyz\right)$  for all $x,y,z\in A$ where $\phi$ is  map from a Banach algebra $A$ onto a semisimple Banach algebra $B$. One difficulty that arises here is that the spectral assumption on $\phi$ does not seem to imply the semisimplicity of $A$ (in contrast to the case where $A$ is the assumed semisimple Banach algebra, with semisimplicity of $B$ following via the spectral assumption).  By application of Theorem~\ref{unique} one can nevertheless improve \cite[Corollary 4.1]{specproperties} to hold for all $x,y$ rather than for all invertible $x,y$, and one can therefore omit the linearity assumption in \cite[Corollary 4.2]{specproperties} with the same conclusion. As a final remark, Example~\ref{counter} shows that the spectral assumption (i) in Theorems~\ref{main} and \ref{main2} cannot be relaxed to products of two elements which leaves open the question of whether the continuity assumption (ii) on $\phi$ is perhaps superfluous?      
\end{remark}

 \bibliographystyle{amsplain}
 \bibliography{Spectral}

\providecommand{\bysame}{\leavevmode\hbox to3em{\hrulefill}\thinspace}
\providecommand{\MR}{\relax\ifhmode\unskip\space\fi MR }
\providecommand{\MRhref}[2]{%
  \href{http://www.ams.org/mathscinet-getitem?mr=#1}{#2}
}
\providecommand{\href}[2]{#2}
\begin{thebibliography}{1}

\bibitem{aupetit1991primer}
B.~Aupetit, \emph{A {P}rimer on {S}pectral {T}heory}, Universitext (1979),
  Springer-Verlag, 1991.

\bibitem{univari}
G.~Braatvedt and R.~Brits, \emph{Uniqueness and {S}pectral {V}ariation in
  {B}anach {A}lgebras}, Quaest. Math. \textbf{36} (2013), 155--165.

\bibitem{specproperties}
M.~Bre\v{s}ar and \v{S}. \v{S}penko, \emph{{D}etermining {E}lements in {B}anach
  {A}lgebras through {S}pectral {P}roperties}, J. {M}ath. {A}nal. {A}ppl.
  \textbf{393} (2012), 144--150.

\bibitem{unitalspec}
O.~Hatori, T.~Miura, and H.~Tagaki, \emph{Unital and {M}ultiplicatively
  {S}pectrum-preserving {S}urjections between {S}emi-simple {C}ommutative
  {B}anach {A}lgebras are {L}inear and {M}ultiplicative}, J. {M}ath. {A}nal.
  {A}ppl. \textbf{326} (2007), 281--296.

\end{thebibliography}

\end{document}